\documentclass[preprint,11pt]{elsarticle}

\setlength{\textwidth}{6.3in} \setlength{\textheight}{9.25in}
\setlength{\evensidemargin}{0in} \setlength{\oddsidemargin}{0in}
\setlength{\topmargin}{-.3in}



\usepackage{amssymb}
\usepackage{amsmath,amsthm,amsfonts,amssymb,latexsym,mathrsfs,color,hyperref}

\newtheorem{theorem}{Theorem}
\newtheorem{corollary}[theorem]{Corollary}
\newtheorem{proposition}[theorem]{Proposition}

\newtheorem{lemma}[theorem]{Lemma}

\newcommand{\negg}{{\rm neg\,}}

\newcommand{\cyc}{{\rm cyc\,}}

\newcommand{\excc}{{\rm exc\,}}

\newcommand{\msn}{\mathcal{S}_n}
\newcommand{\ms}{\mathfrak{S}}

\newcommand{\lrf}[1]{\lfloor #1\rfloor}

\newcommand{\mbn}{{\mathcal S}^B_n}

\linespread{1.25}


\journal{}

\begin{document}

\begin{frontmatter}



\title{The ratio monotonicity of Eulerian-type polynomials}


\author[focal]{Jun-Ying Liu}
\ead{jyliu6@163.com}
\author[focal]{Guanwu Liu}
\ead{liuguanwu@hotmail.com}
\author[focal]{Shi-Mei Ma}
\ead{shimeimapapers@163.com}
\cortext[cor1]{Corresponding author: Shi-Mei~Ma}
\author[focal]{Zhi-Hong Zhang}
\ead{zhangzhihongpapers@163.com}
\address[focal]{School of Mathematics and Statistics, Shandong University of Technology, Zibo, Shandong 255000, P.R. China}
\begin{abstract}
This paper is motivated by determining the location of modes of some unimodal Eulerian-type polynomials.
The notion of ratio monotonicity was introduced by Chen-Xia when they investigated the $q$-derangement numbers. 
Let $(f_n(x))_{n\geqslant 0}$ be a sequence of real polynomials satisfying the Eulerian-type recurrence relation
$$f_{n+1}(x)=(anx+bx+c)f_n(x)+ax(1-x)\frac{\mathrm{d}}{\mathrm{d}x}f_n(x),~f_0(x)=1,$$
where $a,b$ and $c$ are nonnegative integers. Assume that $\deg f_n(x)=n$.
Setting $g_n(x)=x^nf_n\left(\frac{1}{x}\right)$, we have 
$$g_{n+1}(x)=(anx+b+cx)g_n(x)+ax(1-x)\frac{\mathrm{d}}{\mathrm{d}x}g_n(x),$$
We find that if $a+c\geqslant b\geqslant c>0$, then $f_n(x)$ is bi-gamma-positive and 
$g_n(x)$ is ratio monotone.
As applications, we discover the ratio monotonicity of several Eulerian-type polynomials, including the $(\excc,\cyc)$ $q$-Eulerian polynomials, the $1/k$-Eulerian polynomials, a kind of generalized Eulerian polynomials studied
by Carlitz-Scoville, the $(\operatorname{des}_B,\negg)$ $q$-Eulerian polynomials over the hyperoctahedral group and the $r$-colored Eulerian polynomials. In particular, let $A_n(x,q)$ be the $(\excc,\cyc)$ $q$-Eulerian polynomials, we find that the polynomials $x^{n-1}A_n(1/x,q)$ are ratio monotone when $0<q\leqslant 1$, 
while $A_n(x,q)$ are ratio monotone when $1\leqslant q\leqslant 2$.
\end{abstract}


\begin{keyword}
Ratio monotonicity \sep  Eulerian polynomials \sep Recurrence systems\sep Unimodality
\MSC[2010] 26D05\sep 05A15
\end{keyword}
\end{frontmatter}

\section{Introduction}
The types $A$ and $B$ Eulerian polynomials can be respectively defined by
\begin{equation*}
\begin{split}
A_{n+1}(x)&=(nx+1)A_{n}(x)+x(1-x)\frac{\mathrm{d}}{\mathrm{d}x}A_{n}(x),\\
B_{n+1}(x)&=(2nx+x+1)B_{n}(x)+2x(1-x)\frac{\mathrm{d}}{\mathrm{d}x}B_{n}(x),
\end{split}
\end{equation*}
with $A_0(x)=B_0(x)=1$ (see~\cite{Brenti94,Chow08}). There has been much recent work on Eulerian-type recursions, see~\cite{Barbero14,Hwang20,Ma26,Zhu} for instance.
For example, Hwang-Chern-Duh~\cite{Hwang20} considered the general recurrence relation:
\begin{equation*}\label{Eulerian02}
\mathcal{P}_{n}(x)=(\alpha(x)n+\gamma(x))\mathcal{P}_{n-1}(x)+\beta(x)(1-x)\frac{\mathrm{d}}{\mathrm{d}x}\mathcal{P}_{n-1}(x),
\end{equation*}
where $\mathcal{P}_0(x),\alpha(x),\beta(x)$ and $\gamma(x)$ are given functions of $x$. They studied the limiting distribution of the coefficients of $\mathcal{P}_n(x)$. Very recently, Liu-Yan~\cite{Liu25} considered interlacing properties related to
the polynomials $f_n(x)$ satisfying the recurrence relation
$$f_n(x)=(a_1n+a_2+(b_1n+b_2)x)f_{n-1}(x)+c_1(n-1)xf_{n-2}(x)+x(a_3+b_3x)\frac{\mathrm{d}}{\mathrm{d}x}f_{n-1}(x),$$
with $f_0(x)=1$. Unimodal and log-concave polynomials arise often in combinatorics and other branches in mathematics, 
but to determine the location of modes may be a difficult challenge, see~\cite{Branden15,Brenti940} for surveys on this topic.
This paper is motivated by determining the location of modes of some unimodal Eulerian-type polynomials.

Let $a(x)=\sum_{i=0}^na_ix^i$ be a polynomial with nonnegative coefficients.
We say that $a(x)$ is {\it unimodal} if $a_0\leqslant a_1\leqslant \cdots\leqslant a_{k-1}\leqslant a_k\geqslant a_{k+1}\geqslant\cdots \geqslant a_n$ for some $k$, where the index $k$ is called the {\it mode} of $a(x)$. It is {\it log-concave} if $a_i^2\geqslant a_{i-1}a_{i+1}$ for all $1\leqslant i\leqslant n-1$.
Clearly, a log-concave sequence with no internal zeros is unimodal, but the converse is not true.
Darroch~\cite{Darroch64} showed that if $a(x)$ has 
only real nonpositive zeros, then $a(x)$ is unimodal with at most two modes and each mode $k$ satisfies 
$$\left\lfloor \frac{a'(1)}{a(1)} \right\rfloor\leqslant k\leqslant \left\lceil \frac{a'(1)}{a(1)} \right\rceil.$$
The polynomial $a(x)$ is called {\it gamma-positive} if there exist nonnegative numbers $\gamma_k$ such that
$$a(x)=\sum_{k=0}^{\lrf{{n}/{2}}}\gamma_kx^k(1+x)^{n-2k}.$$
Clearly, gamma-positivity is a property that polynomials with symmetric coefficients may have, which implies their unimodality and symmetry, see~\cite{Athanasiadis17} for a survey on this subject.

We say that $a(x)$ is {\it spiral} if
$a_n\leqslant a_0\leqslant a_{n-1}\leqslant a_1\leqslant \cdots\leqslant a_{\lrf{n/2}}$.
Following~\cite[Definition 2.9]{Schepers13}, the polynomial $a(x)$ is {\it alternatingly increasing} if
$a_0\leqslant a_n\leqslant a_1\leqslant a_{n-1}\leqslant\cdots \leqslant a_{\lrf{{(n+1)}/{2}}}$.
Clearly, if $a(x)$ is spiral and $\deg a(x)=n$, then $x^na(1/x)$ is alternatingly increasing, and vice versa.
The notion of ratio monotonicity for polynomials was introduced by Chen-Xia~\cite{Chen11}.
We say that $a(x)$ is {\it ratio monotone} if 
\begin{equation}\label{ratio-def1}
\frac{a_n}{a_0} \leqslant \frac{a_{n-1}}{a_1} \leqslant \cdots \leqslant \frac{a_{n-i}}{a_i}\leqslant \frac{a_{n-i-1}}{a_{i+1}} \leqslant \cdots \leqslant \frac{a_{n-\left[\frac{n-1}{2}\right]}}{a_{\left[\frac{n-1}{2}\right]}} \leqslant 1
\end{equation}
and
\begin{equation}\label{ratio-def2}
\frac{a_0}{a_{n-1}} \leqslant \frac{a_1}{a_{n-2}} \leqslant \cdots \leqslant 
\frac{a_{i-1}}{a_{n-i}} \leqslant\frac{a_{i}}{a_{n-i-1}} \leqslant \cdots \leqslant \frac{a_{\left[\frac{n}{2}\right]-1}}{a_{n-\left[\frac{n}{2}\right]}} \leqslant 1.
\end{equation}
Ratio monotonicity implies the log-concavity and spiral property.
From~\eqref{ratio-def1} and~\eqref{ratio-def2}, we have
$$\frac{a_{i+1}}{a_{i}}\leqslant \frac{a_{n-i-1}}{a_{n-i}} \leqslant \frac{a_i}{a_{i-1}}.$$
In~\cite{Chen09,Chen11}, Chen-Xia gave elegant proofs of the ratio monotonicity of Boros-Moll polynomials and $q$-derangement numbers.
Let $P(x)$ be a polynomial with nonnegative and nondecreasing coefficients.
Chen-Yang-Zhou~\cite{Chen10} proved the ratio monotone property of $P(x+1)$. Since then, there has been much attention on the 
ratio monotone property of enumerative polynomials. For example, Su-Sun~\cite{Su23} established ratio monotonicity of the coordinator
polynomials of the root lattice of type $B_n$.

Let $f(x)$ be a polynomial of degree $n$.
There is a unique symmetric decomposition $f(x)= a(x)+xb(x)$, where 
\begin{equation}\label{ax-bx-prop01}
a(x)=\frac{f(x)-x^{n+1}f(1/x)}{1-x},~b(x)=\frac{x^nf(1/x)-f(x)}{1-x}.
\end{equation}
Clearly, $a(x)$ and $b(x)$ are both symmetric. Moreover, $\deg a(x)=1+\deg b(x)$ or $b(x)=0$.
Following Br\"and\'{e}n-Solus~\cite{Branden18}, we call the ordered pair of polynomials $(a(x),b(x))$ the {\it symmetric decomposition} of $f(x)$.
According to~\cite[Definition~1.2]{Ma24}, if $a(x)$ and $b(x)$ are both gamma-positive, then $f(x)$ is said to be {\it bi-gamma-positive}.
Bi-gamma-positivity implies alternatingly increasing property and gamma-positivity can be seen as a degeneration of bi-gamma-positivity. 
For example, the polynomial $1+10x+4x^2$ is bi-gamma-positive, since 
$$1+10x+4x^2=(1+7x+x^2)+x(3+3x)=((1+x)^2+5x)+x(3(1+x)).$$

We can now present the main result of this paper.
\begin{theorem}\label{thmmain}
Let $(f_n(x))_{n\geqslant 0}$ be a sequence of real polynomials satisfying the Eulerian-type recurrence
\begin{equation}\label{fnx-recu}
f_{n+1}(x)=(anx+bx+c)f_n(x)+ax(1-x)\frac{\mathrm{d}}{\mathrm{d}x}f_n(x),
\end{equation}
with $f_0(x)=1$, where $a,b$ and $c$ are nonnegative integers. Suppose that $\deg f_n(x)=n$.
If $a+c\geqslant b\geqslant c>0$, then $f_n(x)$ is bi-gamma-positive and 
$x^nf_n(1/x)$ is ratio monotone. 
\end{theorem}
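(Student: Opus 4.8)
The plan is to read off the coefficient recursions, deduce bi-gamma-positivity of $f_n(x)$ from its symmetric decomposition, and then prove ratio monotonicity of $g_n(x):=x^nf_n(1/x)$ by induction on $n$, in the style of Chen-Xia. A routine induction on \eqref{fnx-recu} first shows that $f_n(x)$ has nonnegative integer coefficients, that $f_n(0)=c^n$, and that the leading coefficient of $f_n$ equals $b^n$ (so $\deg f_n=n$, using $b\geqslant c>0$). Comparing coefficients of $x^i$ in \eqref{fnx-recu} gives $[x^i]f_{n+1}=(ai+c)[x^i]f_n+(a(n-i+1)+b)[x^{i-1}]f_n$, while from the recursion $g_{n+1}(x)=(anx+cx+b)g_n(x)+ax(1-x)\frac{\mathrm{d}}{\mathrm{d}x}g_n(x)$ one gets, writing $c^{(n)}_i:=[x^i]g_n(x)$,
\[
c^{(n+1)}_i=u_i\,c^{(n)}_i+v_i\,c^{(n)}_{i-1},\qquad u_i:=ai+b,\quad v_i:=a(n-i+1)+c.
\]
Here $(u_i)$ is increasing and $(v_i)$ decreasing in $i$, $u_i+v_i=a(n+1)+b+c$ is independent of $i$, and $u_i-v_{n+1-i}=u_{n+1-i}-v_i=b-c\geqslant0$; also $c^{(n)}_0=b^n\geqslant c^n=c^{(n)}_n$. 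These identities, together with $a+c\geqslant b\geqslant c>0$, drive everything below.

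Since $x^{n+1}f_n(1/x)=x\,g_n(x)$, the decomposition \eqref{ax-bx-prop01} reads $a_n(x)=\dfrac{f_n(x)-x\,g_n(x)}{1-x}$ and $b_n(x)=\dfrac{g_n(x)-f_n(x)}{1-x}$. Substituting the two recursions above and cancelling $1-x$ produces Eulerian-type recursions for $a_n(x)$ and $b_n(x)$, again driven by $ax(1-x)\frac{\mathrm{d}}{\mathrm{d}x}$ with linear pre-factors built from $a,b,c$; from these one extracts recursions for the $\gamma$-coefficients of $a_n$ and $b_n$ and checks by induction that they stay nonnegative. This is precisely the step that uses both inequalities $a+c\geqslant b$ and $b\geqslant c$ (both already appear in the middle $\gamma$-coefficient at the first nontrivial level). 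Hence $a_n$ and $b_n$ are gamma-positive, so $f_n$ is bi-gamma-positive; equivalently $g_n$ is spiral, $c^{(n)}_n\leqslant c^{(n)}_0\leqslant c^{(n)}_{n-1}\leqslant c^{(n)}_1\leqslant\cdots$. This settles the first assertion and supplies the qualitative backbone for the second.

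For ratio monotonicity of $g_n$ we induct on $n$, the small cases being immediate. Assuming $g_n$ ratio monotone, we must verify \eqref{ratio-def1} and \eqref{ratio-def2} for $g_{n+1}$, i.e.\ the product inequalities $c^{(n+1)}_{j+1}c^{(n+1)}_{n+1-j}\leqslant c^{(n+1)}_{j}c^{(n+1)}_{n-j}$ and $c^{(n+1)}_{j-1}c^{(n+1)}_{n-j}\leqslant c^{(n+1)}_{j}c^{(n+1)}_{n+1-j}$ over the relevant ranges of $j$, plus the two terminal inequalities of the chains (comparisons of $c^{(n+1)}_k$ with $c^{(n+1)}_{n+1-k}$ for $k$ near $n/2$). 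Expanding each factor through $c^{(n+1)}_i=u_ic^{(n)}_i+v_ic^{(n)}_{i-1}$, both sides become sums of four products of coefficients of $g_n$ weighted by products of the $u$'s and $v$'s, and the task is to dominate the left-hand sum by the right-hand one, using (i) the inductive ratio monotonicity of $g_n$ and its consequences (log-concavity and the spiral inequalities above), (ii) the monotonicity of $(u_i),(v_i)$ and the identities $u_i+v_i=\mathrm{const}$ and $u_i-v_{n+1-i}=b-c$, and (iii) the numerical relations among $a,b,c$ forced by $a+c\geqslant b\geqslant c>0$, which is what makes the mixed $uv$-terms cooperate.

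The main obstacle is this last step. Because the expansion mixes coefficients of $g_n$ at several indices, ratio monotonicity does not reproduce itself in isolation, so one must carry along an enlarged package of auxiliary inequalities for $g_n$ — e.g.\ comparisons of $c^{(n)}_{i+1}/c^{(n)}_i$ with $c^{(n)}_{n-i-1}/c^{(n)}_{n-i}$, and bounds tying consecutive coefficient ratios to the weights $u_i,v_i$ — engineered so that the whole package is simultaneously regenerated at level $n+1$. The behaviour near the central coefficient(s) splits according to the parity of $n$, and the two terminal inequalities require separate, slightly ad hoc estimates. Isolating the minimal self-sustaining family of inequalities and checking the finitely many cross-term comparisons it generates is the technical heart of the argument.
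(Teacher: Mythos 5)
Your outline follows the same road map as the paper: symmetric decomposition and a recurrence system for the $\gamma$-coefficients to get bi-gamma-positivity, then induction on $n$ through the two-term coefficient recurrence $c^{(n+1)}_i=u_ic^{(n)}_i+v_ic^{(n)}_{i-1}$ to get ratio monotonicity. The setup is correct (your $u_i$, $v_i$, the identities $u_i+v_i=\mathrm{const}$ and $u_i-v_{n+1-i}=b-c$, and the endpoint values $b^n$, $c^n$ all check out against the paper's \eqref{recurrence relation}). But there is a genuine gap: the inductive step for ratio monotonicity is not actually carried out. You expand the product inequalities into four-term sums and then state that dominating one side by the other is ``the technical heart of the argument,'' to be handled by ``an enlarged package of auxiliary inequalities'' that you do not specify, let alone prove to be self-sustaining. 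That unresolved step is precisely the content of the theorem; as written, the proposal is a plan for a proof rather than a proof.

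The paper closes exactly this gap with Lemma~\ref{lemma2}: a four-term ratio inequality
$$\frac{\lambda_1a_1+(\lambda-\lambda_1)a_3}{\lambda_2a_2+(\lambda-\lambda_2)a_4}\leqslant
\frac{\lambda_1a_3+(\lambda-\lambda_1)a_5+\mu(a_5-a_3)}{\lambda_2a_4+(\lambda-\lambda_2)a_6+\mu(a_6-a_4)}$$
whose hypotheses \eqref{lemma1-equ} are \emph{consequences of the two chains \eqref{ratio-def1}--\eqref{ratio-def2} alone} at three consecutive index pairs, together with the weight conditions $\lambda_2\leqslant\lambda_1\leqslant\lambda$, $\mu\geqslant 0$, which is where $a+c\geqslant b\geqslant c>0$ enters. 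So your worry that ``ratio monotonicity does not reproduce itself in isolation'' turns out to be unfounded in the paper's treatment: no enlarged inequality package is needed, only the spiral and cross-ratio inequalities already implied by the definition. What Lemma~\ref{lemma2} does not cover — the first link of each chain and the central comparison $c^{(n+1)}_{\lfloor (n+1)/2\rfloor}$ versus its mirror — the paper handles by four separate direct computations ($\Delta_1,\dots,\Delta_4$), each reduced to a sign check using $b\leqslant c$ or $c\leqslant a+b$ (in the swapped-parameter form). Your proposal correctly anticipates that these terminal cases need ad hoc estimates but, again, does not supply them. To complete your argument you would need to isolate and prove an analogue of Lemma~\ref{lemma2} and work out the boundary estimates; without these the proof is not there.
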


As Hwang-Chern-Duh~\cite[Section~4.1]{Hwang20} once said, one of the most common 
recursions with rich combinatorial properties among the extensions of Eulerian numbers is given as follows:
\begin{equation}\label{Eulerian5}
\mathcal{P}_{n+1}(x)=(qnx+(qr-p)x+p)\mathcal{P}_{n}(x)+qx(1-x)\frac{\mathrm{d}}{\mathrm{d}x}\mathcal{P}_{n}(x),~\mathcal{P}_{0}(x)=1,
\end{equation}
which covers more then 60 examples in OEIS (see~\cite[Section~4]{Hwang20}). The exponential generating function of $\mathcal{P}_{n}(x)$ has the closed-form~\cite[Eq.~(36)]{Hwang20}:
$$\sum_{n=0}^\infty \mathcal{P}_{n}(x) \frac{z^n}{n!}=\mathrm{e}^{p(1-x)z}\left(\frac{1-x}{1-x\mathrm{e}^{q(1-x)z}}\right)^r.$$
Using Theorem~\eqref{thmmain} and the recursions~\eqref{Eulerian5} and~\eqref{Qmx-recu02}, we get the following result.
\begin{corollary}
Let $\mathcal{P}_{n}(x)$ be the polynomials satisfying the recursion~\eqref{Eulerian5}.
If $q+2p\geqslant qr\geqslant 2p>0$, then $\mathcal{P}_{n}(x)$ is bi-gamma-positive and 
$x^n\mathcal{P}_{n}(1/x)$ is ratio monotone. If $q(1+r)\geqslant 2p\geqslant qr>0$, then $\mathcal{P}_{n}(x)$ is ratio monotone.
\end{corollary}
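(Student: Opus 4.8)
The plan is to obtain both claims as immediate consequences of Theorem~\ref{thmmain}, the only work being to match recurrences and check hypotheses. For the first claim, comparing \eqref{Eulerian5} with \eqref{fnx-recu} coefficient by coefficient (the $nx$\nobreakdash-term, the $x$\nobreakdash-term not involving $n$, the constant term, and the $x(1-x)\frac{\mathrm d}{\mathrm dx}$\nobreakdash-term) identifies $\mathcal P_n(x)$ with the sequence $(f_n(x))$ of Theorem~\ref{thmmain} for the parameters $(a,b,c)=(q,\,qr-p,\,p)$, which are nonnegative integers under the usual integrality of $q,p,r$; also $f_0=\mathcal P_0=1$. The hypothesis $q+2p\geqslant qr\geqslant 2p>0$ is literally $a+c\geqslant b\geqslant c>0$, since $a+c\geqslant b\Leftrightarrow q+2p\geqslant qr$, $\ b\geqslant c\Leftrightarrow qr\geqslant 2p$, and $c>0\Leftrightarrow p>0$. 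Moreover the top-degree coefficient of $f_{n+1}$ in \eqref{fnx-recu} equals $b$ times that of $f_n$ (the top-degree terms of $(anx+bx)f_n$ and of $-ax^2f_n'$ combine to $b\lambda_n x^{n+1}$, where $\lambda_n$ is the leading coefficient of $f_n$), so $\deg\mathcal P_n=n$ follows by induction from $b=qr-p\geqslant p>0$. Theorem~\ref{thmmain} then yields that $\mathcal P_n(x)$ is bi-gamma-positive and $x^n\mathcal P_n(1/x)$ is ratio monotone.

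For the second claim one instead applies Theorem~\ref{thmmain} to $f_n(x):=x^n\mathcal P_n(1/x)$, so that $\mathcal P_n$ plays the role of the reciprocal sequence $g_n(x)=x^nf_n(1/x)$. Substituting $g_n(x)=x^nf_n(1/x)$ into \eqref{fnx-recu} gives $g_{n+1}(x)=(anx+b+cx)g_n(x)+ax(1-x)\frac{\mathrm d}{\mathrm dx}g_n(x)$, i.e.\ \eqref{fnx-recu} with the parameters $b$ and $c$ exchanged (constant term versus coefficient of $x$); matching this with \eqref{Eulerian5} forces $(a,b,c)=(q,\,p,\,qr-p)$, and $f_0(x)=\mathcal P_0(1/x)=1$. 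Since $\mathcal P_n(0)=p^n\neq0$ (immediate from \eqref{Eulerian5}), the polynomial $f_n=x^n\mathcal P_n(1/x)$ has degree exactly $n$. The hypothesis $q(1+r)\geqslant 2p\geqslant qr>0$ gives $a+c\geqslant b\Leftrightarrow q(1+r)\geqslant 2p$ and $b\geqslant c\Leftrightarrow 2p\geqslant qr$; the remaining inequality $c>0$, i.e.\ $qr>p$, is the single point needing an argument: if $p\geqslant qr$ then $q(1+r)\geqslant 2p\geqslant 2qr$ forces $q\geqslant qr$, hence $r\leqslant1$, hence (as $qr>0$ makes $q,r\geqslant1$) $r=1$, and then $2q=q(1+r)\geqslant 2p\geqslant 2qr=2q$ forces $p=q=qr$, so the leading coefficient $(qr-p)^n$ of $\mathcal P_n$ vanishes, contradicting $\deg\mathcal P_n=n$. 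Hence $c=qr-p>0$, so Theorem~\ref{thmmain} applies to $f_n$, and its conclusion that $x^nf_n(1/x)$ is ratio monotone is exactly the assertion that $\mathcal P_n(x)$ is ratio monotone, because $x^nf_n(1/x)=x^n\cdot x^{-n}\mathcal P_n(x)=\mathcal P_n(x)$.

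I expect the only place a careless treatment would go wrong — rather than a genuine obstacle — to be the parameter dictionary: the reciprocal substitution $f_n\mapsto x^nf_n(1/x)$ swaps the coefficient of $x$ with the constant term in \eqref{fnx-recu}, which is precisely why the two halves of the corollary carry the two different conditions $q+2p\geqslant qr$ and $q(1+r)\geqslant 2p$; once that is fixed, verifying $a+c\geqslant b\geqslant c>0$ and $\deg=n$ is routine. For completeness one may also note what happens at the excluded boundary $qr=p$ (forced, as above, to be $(q,r,p)=(q,1,q)$): there \eqref{Eulerian5} reads $\mathcal P_{n+1}(x)=q\bigl((nx+1)\mathcal P_n(x)+x(1-x)\mathcal P_n'(x)\bigr)$, whence $\mathcal P_n(x)=q^nA_n(x)$ with $A_n(x)$ the type $A$ Eulerian polynomial, which is palindromic and real-rooted, hence log-concave, hence ratio monotone; and ratio monotonicity is insensitive to the positive scalar $q^n$. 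So no deeper difficulty arises.
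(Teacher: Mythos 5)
Your proof is correct and follows the same route the paper intends: the paper offers no argument beyond ``apply Theorem~\ref{thmmain} to the recursions~\eqref{Eulerian5} and~\eqref{Qmx-recu02}'', which is exactly your parameter matching $(a,b,c)=(q,\,qr-p,\,p)$ for the first claim and $(a,b,c)=(q,\,p,\,qr-p)$ for the reciprocal sequence in the second. You are in fact more careful than the paper on the one genuine subtlety --- the second claim needs $qr>p$ (so that $c>0$ and $\deg \mathcal{P}_n=n$), whereas the stated hypothesis only gives $qr>0$ --- and your resolution via the leading-coefficient/degree argument, together with the separate treatment of the boundary case $qr=p$, is sound.
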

This paper is organized as follows. Section~\ref{Section02} is devoted to the proof of Theorem~\ref{thmmain}. 
In Section~\ref{Section03}, we present some applications of Theorem~\ref{thmmain}.
\section{Proof of Theorem~\ref{thmmain}}\label{Section02}
\subsection{A proof of the bi-gamma-positivity of $f_n(x)$}
Applying the formula~\eqref{ax-bx-prop01}, we have $f_n(x)=a_n(x)+xb_n(x)$, where
\begin{align*}
a_n(x)=\frac{f_n(x)-x^{n+1}f_n(1/x)}{1-x},~b_n(x)=\frac{x^nf_n(1/x)-f_n(x)}{1-x}.
\end{align*}
Note that
$x^n f_n(1/x)=x^n a_n(1/x)+x^{n-1} b_n(1/x)=a_n(x)+b_n(x)$.
Setting $g_n(x)=x^n f_n(1/x)$,
it is easy to verify that
\begin{equation}\label{Qmx-recu02}
g_{n+1}(x)=(anx+b+cx)g_n(x)+ax(1-x)\frac{\mathrm{d}}{\mathrm{d}x}g_n(x).
\end{equation}
Using~\eqref{fnx-recu} and~\eqref{Qmx-recu02}, $f_n(x)=a_n(x)+xb_n(x)$ and $g_n(x)=a_n(x)+b_n(x)$ can be rewritten as
\begin{align*}
a_{n+1}(x) + x b_{n+1}(x)
&=(anx+bx+c)a_n(x)+x(a(n-1)x+bx+a+c)b_n(x)+\\
&ax(1-x)\left(\frac{\mathrm{d}}{\mathrm{d}x} a_n(x)+x\frac{\mathrm{d}}{\mathrm{d}x} b_n(x)\right);
\end{align*}
\begin{align*}
a_{n+1}(x) + b_{n+1}(x)
&=(anx+b+cx)a_n(x)+(anx+b+cx)b_n(x)+\\
&ax(1-x)\left(\frac{\mathrm{d}}{\mathrm{d}x} a_n(x)+\frac{\mathrm{d}}{\mathrm{d}x} b_n(x)\right).
\end{align*}
In view of $$(1-x) a_{n+1}(x)=a_{n+1}(x) + x b_{n+1}(x)-x\left(a_{n+1}(x) + b_{n+1}(x)\right),$$
$$(x-1) b_{n+1}(x)=a_{n+1}(x) + x b_{n+1}(x)-\left(a_{n+1}(x) + b_{n+1}(x)\right),$$
we obtain the following recurrence system
\begin{equation}\label{recu-system}
\left\{
  \begin{array}{ll}
a_{n+1}(x)&=(anx+c+cx)a_n(x)+ax(1-x)\frac{\mathrm{d}}{\mathrm{d}x} a_n(x)+(a-b+c)xb_n(x),\\
b_{n+1}(x)&=(anx-ax+bx+b)b_n(x)+ax(1-x)\frac{\mathrm{d}}{\mathrm{d}x} b_n(x)+(b-c)a_n(x),
  \end{array}
\right.
\end{equation}
with the initial conditions $a_0(x)=1$ and $b_0(x)=0$. In particular, we have 
$$a_1(x)=c+cx,~b_1(x)=b-c;$$
$$a_2(x)=c^2+(ab-b^2+ac+2bc+c^2)x+c^2x^2,~b_2(x)=(b^2-c^2)(1+x).$$
Assume that 
$$a_n(x)=\sum_{k\geqslant 0}\alpha_{n,k}x^k(1+x)^{n-2k},~b_n(x)=\sum_{k\geqslant 0}\beta_{n,k}x^k(1+x)^{n-1-2k}.$$
Using~\eqref{recu-system}, it is routine to verify that 
\begin{equation}\label{recu-system02}
\left\{
  \begin{array}{ll}
\alpha_{n+1,k}=(ak+c)\alpha_{n,k}+2a(n-2k+2)\alpha_{n,k-1}+(a-b+c)\beta_{n,k-1},\\
\beta_{n+1,k}=(ak+b)\beta_{n,k}+2a(n-2k+1)\beta_{n,k-1}+(b-c)\alpha_{n,k},
  \end{array}
\right.
\end{equation}
with $\alpha_{0,0}=1,~\beta_{0,0}=0$ and $\alpha_{0,k}=\beta_{0,k}=0$ for $k\geqslant 1$. Therefore, when $a+c\geqslant b\geqslant c>0$, we see that $f_n(x)$ is bi-gamma-positive. Moreover, if $a>0$ and $b=c$, then $f_n(x)$ is reduced to a gamma-positive polynomial. \qed

Suppose that $p(x)$ and $q(x)$ have only real zeros, 
the zeros of $p(x)$ are $\xi_1\leqslant\cdots\leqslant\xi_n$,
and that those of $q(x)$ are $\theta_1\leqslant\cdots\leqslant\theta_m$.
Following~\cite{Wagner96}, we say that $p(x)$ {\it interlaces} $q(x)$ if $\deg q(x)=1+\deg p(x)$ and the zeros of
$p(x)$ and $q(x)$ satisfy
$\theta_1\leqslant\xi_1\leqslant\theta_2\leqslant\cdots\leqslant\xi_n
\leqslant\theta_{n+1}$.
We use the notation $p(x)\prec q(x)$ for $p(x)$ interlaces $q(x)$.
By~\cite[Theorem~2.1]{Liu07}, one can immediately get the following result.
\begin{proposition}
Let $\{f_n(x)\}_{n\geqslant 0}$ be a sequence of polynomials satisfying~\eqref{fnx-recu}. Then $f_n(x)$ has only real zeros and so it is unimodal.
\end{proposition}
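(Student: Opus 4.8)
The plan is to prove, by induction on $n$, the stronger statement that $f_n(x)$ has $n$ real zeros, all lying in $(-\infty,0]$, and moreover that $f_n(x)\prec f_{n+1}(x)$; then unimodality of $f_n(x)$ is immediate from Darroch's theorem~\cite{Darroch64}, since nonpositive real zeros together with the positive leading coefficient $b^n$ force every coefficient of $f_n(x)$ to be nonnegative. This is the pattern handled abstractly by~\cite[Theorem~2.1]{Liu07}: one rewrites~\eqref{fnx-recu} as $f_{n+1}(x)=A_n(x)f_n(x)+B_n(x)f_n'(x)$ with $A_n(x)=(an+b)x+c$ and $B_n(x)=ax(1-x)$ and checks that the hypotheses there specialize to this $A_n,B_n$ and the base case $f_0(x)=1$; I would first record that reduction, and the argument below is the underlying mechanism.

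I would dispose of the degenerate cases first. Note that $\deg f_n=n$ already forces the leading coefficient $b^n$ of $f_n(x)$ to be nonzero, so $b\ge 1$. If $a=0$, then~\eqref{fnx-recu} gives $f_n(x)=(bx+c)^n$, whose zeros are all equal to $-c/b\le 0$. If $c=0$, one checks from~\eqref{fnx-recu} that $x\,\|\,f_n(x)$ for $n\ge 1$ and that the cofactor $g_{n-1}(x):=f_n(x)/x$ satisfies a recursion of the same shape as~\eqref{fnx-recu} but with $c$ replaced by $a$, so it suffices to treat the case $a,b,c\ge 1$. For the inductive step in that case, assume $f_n(x)$ has $n$ distinct negative zeros $s_1<s_2<\cdots<s_n<0$. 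Evaluating~\eqref{fnx-recu} at $s_i$ gives $f_{n+1}(s_i)=a\,s_i(1-s_i)\,f_n'(s_i)$, and since $a\,s_i(1-s_i)<0$ the sign of $f_{n+1}(s_i)$ is opposite to that of $f_n'(s_i)$, hence alternates with $i$. Combining this with $f_{n+1}(0)=c\,f_n(0)=c^{\,n+1}>0$ and with the sign of $f_{n+1}(x)$ as $x\to-\infty$ (governed by the leading coefficient $b^{\,n+1}$), a routine count exhibits one zero of $f_{n+1}(x)$ in each of the $n+1$ intervals $(-\infty,s_1),(s_1,s_2),\dots,(s_{n-1},s_n),(s_n,0)$; since $\deg f_{n+1}=n+1$, these are all of its zeros, so they are negative, distinct, and interlace the $s_i$. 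The base cases $n=1,2$ are checked by hand (for $n=2$ the discriminant equals $a(b+c)(ac+4bc+ab)>0$, which is where $a,b,c\ge 1$ is used).

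The sign bookkeeping in the inductive step is routine; the only real care is needed (i) in the degenerate reductions $a=0$ and $c=0$, which are precisely where simplicity of the zeros would otherwise fail, and (ii) in establishing that all zeros lie in $(-\infty,0]$ rather than merely in $\mathbb{R}$ — this is what yields nonnegative coefficients and hence the unimodality conclusion, and it is exactly where one exploits that $B_n(x)=ax(1-x)$ is negative on $(-\infty,0)$ while $A_n(x)$ and the constant term $c^n$ of $f_n(x)$ remain positive there. If one is content to quote~\cite[Theorem~2.1]{Liu07} verbatim, the entire proof reduces to matching its hypotheses against $A_n(x)=(an+b)x+c$ and $B_n(x)=ax(1-x)$.
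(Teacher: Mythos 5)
Your proposal is correct and takes the same route as the paper, which proves this proposition simply by invoking \cite[Theorem~2.1]{Liu07}; your self-contained sign-alternation induction (with the correct leading coefficient $b^n$, constant term $c^n$, discriminant $a(b+c)(ac+4bc+ab)$ for $n=2$, and the reductions for $a=0$ and $c=0$) is exactly the mechanism behind that citation. The only point worth highlighting is that the paper gives no detail beyond the citation, so your explicit verification that all zeros lie in $(-\infty,0]$ --- which is what makes the coefficients nonnegative and hence legitimizes the unimodality conclusion --- is a welcome elaboration rather than a divergence.
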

\subsection{A proof of the ratio monotonicity of $f_n(x)$}
To prove the ratio monotonicity of $f_n(x)$, we need the following lemma.
\begin{lemma}\label{lemma2}
Suppose that $a_1,a_2,a_3,a_4,a_5,a_{6}$ are positive numbers satisfying
\begin{equation}\label{lemma1-equ}
\frac{a_1}{a_2}\leqslant \frac{a_3}{a_4}\leqslant\frac{a_5}{a_6},~\frac{a_2}{a_3}\leqslant\frac{a_4}{a_5},~a_3\leqslant a_5,~a_4\leqslant a_6,
\end{equation}
then we have 
$$\frac{\lambda_1a_1+(\lambda-\lambda_1)a_3}{\lambda_2a_2+(\lambda-\lambda_2)a_4}\leqslant 
\frac{\lambda_1a_3+(\lambda-\lambda_1)a_5+\mu(a_5-a_3)}{\lambda_2a_4+(\lambda-\lambda_2)a_6+\mu(a_6-a_4)},$$
where $0<\lambda_2\leqslant \lambda_1\leqslant \lambda$ and $\mu\geqslant 0$.
\end{lemma}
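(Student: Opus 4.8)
The plan is to reduce the claimed inequality to a cross-multiplied polynomial inequality in the six variables $a_1,\dots,a_6$ and the parameters $\lambda,\lambda_1,\lambda_2,\mu$, and then to verify that inequality term by term using the hypotheses in \eqref{lemma1-equ}. First I would clear denominators: since all quantities are positive, the desired inequality is equivalent to
$$\bigl(\lambda_1a_1+(\lambda-\lambda_1)a_3\bigr)\bigl(\lambda_2a_4+(\lambda-\lambda_2)a_6+\mu(a_6-a_4)\bigr)\leqslant \bigl(\lambda_1a_3+(\lambda-\lambda_1)a_5+\mu(a_5-a_3)\bigr)\bigl(\lambda_2a_2+(\lambda-\lambda_2)a_4\bigr).$$
Expanding both sides, I expect the products to organize naturally along the "weights" $\lambda_1\lambda_2$, $\lambda_1(\lambda-\lambda_2)$, $(\lambda-\lambda_1)\lambda_2$, $(\lambda-\lambda_1)(\lambda-\lambda_2)$, together with the $\mu$-terms $\mu\lambda_1$, $\mu(\lambda-\lambda_1)$, and $\mu\lambda_2$, $\mu(\lambda-\lambda_2)$. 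The strategy is to show the inequality holds coefficient-by-coefficient in these nonnegative weights; because $0<\lambda_2\leqslant\lambda_1\leqslant\lambda$ and $\mu\geqslant0$, a weightwise comparison suffices, though some care is needed because the weights are not independent (e.g. one must pair up the $\lambda_1(\lambda-\lambda_2)$ and $(\lambda-\lambda_1)\lambda_2$ contributions, using $\lambda_1\geqslant\lambda_2$, rather than bounding them separately).

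Concretely, the $\lambda_1\lambda_2$-coefficient comparison reads $a_1a_4\leqslant a_3a_2$, which is exactly the first half of $\tfrac{a_1}{a_2}\leqslant\tfrac{a_3}{a_4}$; the $(\lambda-\lambda_1)(\lambda-\lambda_2)$-coefficient reads $a_3a_6\leqslant a_5a_4$, which is $\tfrac{a_3}{a_4}\leqslant\tfrac{a_5}{a_6}$; the pure $\mu$-terms give $(a_5-a_3)(\lambda_2a_2+(\lambda-\lambda_2)a_4)\geqslant 0$ against $(a_6-a_4)(\lambda_1a_1+(\lambda-\lambda_1)a_3)\geqslant 0$ and must be handled using $a_5\geqslant a_3$, $a_6\geqslant a_4$ together with the ratio bounds (here one rewrites $a_5-a_3\geqslant 0$ and $a_6-a_4\geqslant 0$ and compares $(a_5-a_3)a_2$ with $(a_6-a_4)a_1$ via $a_1/a_2\leqslant a_3/a_4\leqslant a_5/a_6$ and $a_3\leqslant a_5$, $a_4\leqslant a_6$). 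The mixed terms involving $a_1a_6$, $a_3a_4$, $a_3^2$, $a_5a_2$, $a_3a_4$ are where the condition $\tfrac{a_2}{a_3}\leqslant\tfrac{a_4}{a_5}$, i.e. $a_2a_5\leqslant a_3a_4$, enters: it controls the cross term that does not reduce to the outer ratio chain alone.

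I would carry this out by: (1) writing $a_5=a_3+s$, $a_6=a_4+t$ with $s,t\geqslant0$ to absorb the $\mu$-terms cleanly and linearize the dependence on $\mu$; (2) reducing to the case $\mu=0$ for the "bulk" and checking the $\mu$-linear part separately, since the inequality is affine in $\mu$ and it suffices to verify it at $\mu=0$ and in the limit $\mu\to\infty$ (the latter amounting to $\tfrac{a_5-a_3}{a_6-a_4}\geqslant\tfrac{\lambda_1a_1+(\lambda-\lambda_1)a_3}{\lambda_2a_2+(\lambda-\lambda_2)a_4}$); (3) for $\mu=0$, collecting the four $\lambda_i$-weighted terms and proving the inequality by a convexity/mediant argument — the quantity $\tfrac{\lambda_1 u+(\lambda-\lambda_1)v}{\lambda_2 u'+(\lambda-\lambda_2)v'}$ is a weighted mediant, and monotonicity of mediants under the ordering hypotheses does the job. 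The main obstacle I anticipate is step (3): the two numerator/denominator pairs do not share the same weights ($\lambda_1$ vs.\ $\lambda_2$), so the standard "mediant of two fractions lies between them" lemma does not apply verbatim; one must exploit $\lambda_2\leqslant\lambda_1$ to shift weight and then invoke both $a_1/a_2\leqslant a_3/a_4\leqslant a_5/a_6$ and $a_2/a_3\leqslant a_4/a_5$ simultaneously. I expect that after the substitution in step (1) this becomes a sum of manifestly nonnegative products, at which point the proof is a short verification.
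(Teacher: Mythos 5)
Your plan is correct and essentially reproduces the paper's proof: after cross-multiplying, the paper decomposes the $\mu=0$ difference as $\lambda_1\lambda_2(a_1a_4-a_2a_3)+(\lambda-\lambda_1)(\lambda-\lambda_2)(a_3a_6-a_4a_5)+\lambda(\lambda_1-\lambda_2)(a_1a_6-a_3a_4)+\lambda_2(\lambda-\lambda_1)(a_1a_6-a_2a_5)$, which is exactly the weight-shifting across the $\lambda_1(\lambda-\lambda_2)$ and $(\lambda-\lambda_1)\lambda_2$ terms that you describe, with $a_1a_6\leqslant a_3a_4$ obtained from $\frac{a_1}{a_2}\leqslant\frac{a_5}{a_6}$ combined with $a_2a_5\leqslant a_3a_4$. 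Your handling of the $\mu$-term (the inequality is affine in $\mu$, so it suffices to check $\mu=0$ and the slope condition $\frac{\lambda_1a_1+(\lambda-\lambda_1)a_3}{\lambda_2a_2+(\lambda-\lambda_2)a_4}\leqslant\frac{a_5-a_3}{a_6-a_4}$) is equivalent to the paper's mediant argument, which verifies that slope condition by chaining the $\mu=0$ fraction through $\frac{a_5}{a_6}\leqslant\frac{a_5-a_3}{a_6-a_4}$ (using $a_3a_6\leqslant a_4a_5$) --- a slightly cleaner route than your proposed direct comparison of $(a_5-a_3)a_2$ with $(a_6-a_4)a_1$.
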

\begin{proof}
Note that 
$a_1(a_4+a_6)-a_2(a_3+a_5)=(a_1a_4-a_2a_3)+(a_1a_6-a_2a_5)\leqslant 0$.
We get 
\begin{equation}\label{e}
\frac{a_1}{a_2}\leqslant \frac{a_3+a_5}{a_4+a_6}.
\end{equation}

We now show that 
\begin{equation}\label{eq}
\frac{\lambda_1a_1+(\lambda-\lambda_1)a_3}{\lambda_2a_2+(\lambda-\lambda_2)a_4}\leqslant 
\frac{\lambda_1a_3+(\lambda-\lambda_1)a_5}{\lambda_2a_4+(\lambda-\lambda_2)a_6}.
\end{equation}
It follows from~\eqref{lemma1-equ} that 
$$a_1a_4\leqslant a_2a_3,~a_3a_6\leqslant a_4a_5,~a_1a_6\leqslant a_2a_5,~\frac{a_1}{a_4}= \frac{a_1}{a_2}\frac{a_2}{a_4}\leqslant\frac{a_5}{a_6}\frac{a_3}{a_5}=\frac{a_3}{a_6}.$$
Therefore, we obtain 
\begin{align*}
&\left(\lambda_1a_1+(\lambda-\lambda_1)a_3\right)\left(\lambda_2a_4+(\lambda-\lambda_2)a_6\right)-\left(\lambda_2a_2+(\lambda-\lambda_2)a_4\right)
\left(\lambda_1a_3+(\lambda-\lambda_1)a_5\right)\\
&=\lambda_1\lambda_2(a_1a_4-a_2a_3)+(\lambda-\lambda_1)(\lambda-\lambda_2)(a_3a_6-a_4a_5)+\\
&\lambda_1(\lambda-\lambda_2)a_1a_6-\lambda(\lambda_1-\lambda_2)a_3a_4-\lambda_2(\lambda-\lambda_1)a_2a_5\\
&=\lambda_1\lambda_2(a_1a_4-a_2a_3)+(\lambda-\lambda_1)(\lambda-\lambda_2)(a_3a_6-a_4a_5)+\\
&(\lambda_1\lambda-\lambda_2\lambda+\lambda_2\lambda-\lambda_1\lambda_2)a_1a_6-\lambda(\lambda_1-\lambda_2)a_3a_4-\lambda_2(\lambda-\lambda_1)a_2a_5\\
&=\lambda_1\lambda_2(a_1a_4-a_2a_3)+(\lambda-\lambda_1)(\lambda-\lambda_2)(a_3a_6-a_4a_5)+\\
&\lambda(\lambda_1-\lambda_2)(a_1a_6-a_3a_4)+\lambda_2(\lambda-\lambda_1)(a_1a_6-a_2a_5)\leqslant 0,
\end{align*}
which yields~\eqref{eq}, as desired. 

Note that 
\begin{align*}
&\left(\lambda_1a_3+(\lambda-\lambda_1)a_5\right)a_6-\left(\lambda_2a_4+(\lambda-\lambda_2)a_6\right)a_5\\
&=\lambda_1a_3a_6-\lambda_2a_4a_5+(\lambda_2-\lambda_1)a_5a_6\\
&=\lambda_1a_6(a_3-a_5)+\lambda_2a_5(a_6-a_4)\\
&\leqslant \lambda_1a_6(a_3-a_5)+\lambda_1a_5(a_6-a_4)\\
&=\lambda_1(a_3a_6-a_4a_5)\leqslant 0.
\end{align*}
So we obtain
\begin{equation}\label{eqq}
\frac{\lambda_1a_3+(\lambda-\lambda_1)a_5}{\lambda_2a_4+(\lambda-\lambda_2)a_6}\leqslant \frac{a_5}{a_6}\leqslant \frac{a_5-a_3}{a_6-a_4},
\end{equation}
where the last inequality can be easily verified.
In conclusion, we get
 \begin{equation*}\label{eqqq}
\frac{\lambda_1a_1+(\lambda-\lambda_1)a_3}{\lambda_2a_2+(\lambda-\lambda_2)a_4}\leqslant 
\frac{\lambda_1a_3+(\lambda-\lambda_1)a_5}{\lambda_2a_4+(\lambda-\lambda_2)a_6}\leqslant \frac{a_5-a_3}{a_6-a_4}.
\end{equation*}
Comparing with~\eqref{lemma1-equ} and~\eqref{e}, we get the desired result. This completes the proof.
\end{proof}

\noindent{\bf A proof of the ratio monotonicity of $f_n(x)$:}
\begin{proof}
Comparing~\eqref{fnx-recu} and~\eqref{Qmx-recu02}, we see that the following two statements are equivalent:
\begin{itemize}
  \item [$(i)$] If $a+c\geqslant b\geqslant c>0$, then $x^nf_n(1/x)$ is ratio monotone;
  \item [$(ii)$] If $a+b\geqslant c\geqslant b>0$, then $f_n(x)$ is ratio monotone.
\end{itemize}

Let $f_n(x)=\sum_{i=0}^{n}f_{n,i}x^i$.  
It follows from~\eqref{fnx-recu} that
\begin{equation}\label{recurrence relation}
f_{n+1, i}=(ai+c)f_{n, i} + (a(n-i) + a + b)f_{n, i-1}.
\end{equation} 
In the following discussion, assume that $a+b\geqslant c\geqslant b>0$.
Note that $$f_1(x)=c+bx,~f_2(x)=c^2+(ac+2bc+ab)x+b^2 x^2,$$
$$f_3(x)  = c^3 + (a^2 c+a^2b+3abc+3bc^2+3ac^2) x + (a^2 c+a^2b+3abc+3b^2c+3ab^2)x^2 + b^3 x^3 .$$
The result is true for $n=1,2,3$, since $b\leqslant c,~b^2\leqslant c^2,~c^2\leqslant ac + 2bc  + ab$ and
$$\frac{b^3}{c^3}\leqslant \frac{a^2 c+a^2b+3abc+3b^2c+3ab^2}{a^2 c+a^2b+3abc+3bc^2+3ac^2}\leqslant 1,~\frac{c^3}{a^2 c+a^2b+3abc+3b^2c+3ab^2}\leqslant 1,$$
where the last inequality can be derived by using the fact that $(a+b)^2\geqslant c^2$.

Suppose that $f_n(x)$ is the ratio monotone. 
When $n=2m$, we have
 \begin{equation}\label{ratio-fn-1}
  \frac{f_{2m, 2m}}{f_{2m, 0}} \leqslant \frac{f_{2m, 2m-1}}{f_{2m, 1}} \leqslant \cdots \leqslant \frac{f_{2m, 2m-i}}{f_{2m, i}} \leqslant \cdots \leqslant 
  \frac{f_{2m, m+1}}{f_{2m, m-1}}  \leqslant 1
\end{equation}  
and
\begin{equation}\label{ratio-fn-2}
  \frac{f_{2m, 0}}{f_{2m, 2m-1}} \leqslant \frac{f_{2m, 1}}{f_{2m, 2m-2}} \leqslant \cdots \leqslant \frac{f_{2m, i-1}}{f_{2m, 2m-i}} \leqslant \cdots \leqslant 
  \frac{f_{2m, m-1}}{f_{2m, m}} \leqslant 1.
\end{equation} 
We proceed by induction. In the following, we need to show that 
\begin{equation}\label{ratio-fn+1-1}
  \frac{f_{2m+1, 2m+1}}{f_{2m+1, 0}} \leqslant \frac{f_{2m+1, 2m}}{f_{2m+1, 1}}\leqslant \cdots \leqslant \frac{f_{2m+1, 2m+1-i}}{f_{2m+1, i}} \leqslant \cdots \leqslant 
  \frac{f_{2m+1, m+1}}{f_{2m+1, m}}  \leqslant 1
  \end{equation}  
and
\begin{equation}\label{ratio-fn+1-2}
  \frac{f_{2m+1, 0}}{f_{2m+1, 2m}} \leqslant \frac{f_{2m+1, 1}}{f_{2m+1, 2m-1}} \leqslant \cdots \leqslant \frac{f_{2m+1, i}}{f_{2m+1, 2m-i}} \leqslant \cdots \leqslant 
  \frac{f_{2m+1, m-1}}{f_{2m+1, m+1}} \leqslant 1.
\end{equation} 

We first establish~\eqref{ratio-fn+1-1}.
Note that
\begin{align*}
 &\Delta_1:= f_{2m+1, 2m+1} f_{2m+1, 1} - f_{2m+1, 0} f_{2m+1, 2m}\\
 &=bf_{2m,2m}((a+c)f_{2m,1}+(2ma+b)f_{2m,0})-cf_{2m,0}((2ma+c)f_{2m,2m}+(a+b)f_{2m,2m-1})\\
     =  & (b-c)(2am+b+c) f_{2m, 0} f_{2m, 2m} + b(a+c)f_{2m, 1} f_{2m, 2m} - c(a+b) f_{2m, 0} f_{2m, 2m-1}.
\end{align*}
From the left side of~\eqref{ratio-fn-1}, we see that $f_{2m, 1}f_{2m, 2m}  \leqslant f_{2m, 0} f_{2m, 2m-1}$.
It follows from $b\leqslant c$ that \begin{align*}
 &\Delta_1\leqslant (b-c)(2am+b+c) f_{2m, 0} f_{2m, 2m} + b(a+c)f_{2m, 0} f_{2m, 2m-1} - c(a+b) f_{2m, 0} f_{2m, 2m-1}\\
& = (b-c)(2am+b+c) f_{2m, 0} f_{2m, 2m} +a(b-c)f_{2m, 2m-1}\leqslant 0.
\end{align*}
Hence 
 $$\frac{f_{2m+1, 2m+1}}{f_{2m+1, 0}} \leqslant \frac{f_{2m+1, 2m}}{f_{2m+1, 1}}.$$
From the right sides of~\eqref{ratio-fn-1} and~\eqref{ratio-fn-2}, we see that $f_{2m,m+1}\leqslant f_{2m,m-1}\leqslant f_{2m,m}$.
So we have 
\begin{align*}
 &\Delta_2:= f_{2m+1, m+1} -f_{2m+1, m}\\
 &=(a(m+1)+c)f_{2m,m+1}+(ma+b)f_{2m,m}-(am+c)f_{2m,m}-(a(m+1)+b)f_{2m,m-1}\\
   &  =   (b-c)f_{2m, m}  +(a(m+1)+c)f_{2m,m+1}- (a(m+1)+b)f_{2m,m-1}\\
     &\leqslant  (b-c)f_{2m, m} +(a(m+1)+c)f_{2m,m-1}- (a(m+1)+b)f_{2m,m-1}\\
     &=(b-c)(f_{2m, m}-f_{2m,m-1})\leqslant 0,
\end{align*}
which yields that $f_{2m+1, m+1} \leqslant f_{2m+1, m}$.
We now ready to show that for $1\leqslant i\leqslant m-1$, we have
  $$\frac{f_{2m+1, 2m+1-i}}{f_{2m+1, i}} \leqslant  \frac{f_{2m+1, 2m-i}}{f_{2m+1, i+1}}.$$
From~\eqref{ratio-fn-1} and~\eqref{ratio-fn-2}, we observe that 
\begin{gather*}
  \frac{f_{2m, 2m-i+1}}{f_{2m, i-1}} \leqslant \frac{f_{2m, 2m-i}}{f_{2m, i}} \leqslant  \frac{f_{2m,2m-i-1}}{f_{2m,i+1}}\leqslant 1,\\
  \frac{f_{2m, i-1}}{f_{2m, 2m-i}} \leqslant \frac{f_{2m, i}}{f_{2m, 2m-i-1}} \leq 1.
\end{gather*}
In Lemma~\ref{lemma2}, setting $a_1=f_{2m, 2m-i+1},~a_2=f_{2m, i-1},~a_3=f_{2m, 2m-i},~a_4=f_{2m, i},~a_5=f_{2m,2m-i-1}$, $a_6=f_{2m,i+1}$,
$\lambda_1=a(2m+1-i)+c,~\lambda_2=a(2m+1-i)+b,~\lambda=(2m+1)a+b+c$ and $\mu =a$,
it follows from~\eqref{recurrence relation} that 
\begin{align*}
       & \frac{f_{2m+1, 2m+1-i}}{f_{2m+1, i}}=\frac{(a(2m+1-i)+c)f_{2m, 2m-i+1} + (ai+b)f_{2m, 2m-i}}{(a(2m+1-i)+b)f_{2m, i-1} + (ai+c)f_{2m, i}}\\
  \leqslant & \frac{(a(2m-i)+c)f_{2m, 2m-i} + (a(i+1)+b)f_{2m, 2m-i-1}}{(a(2m-i)+b)f_{2m, i} + (a(i+1)+c)f_{2m, i+1}}=\frac{f_{2m+1, 2m-i}}{f_{2m+1, i+1}}.
\end{align*}
Hence the proof of~\eqref{ratio-fn+1-1} is complete.

Next, we proceed to prove~\eqref{ratio-fn+1-2}.
From~\eqref{ratio-fn-1} and~\eqref{ratio-fn-2}, we see that
\begin{gather*}
  f_{2m, 0}f_{2m, 2m-2} \leqslant  f_{2m, 1}f_{2m, 2m-1},~f_{2m, 2m} \leqslant f_{2m, 2m-1} \leqslant  f_{2m, 2m-2},~f_{2m, 0} \leqslant f_{2m, 1}. 
\end{gather*}
Thus we get 
\begin{align*}
&\Delta_3:=f_{2m+1, 0} f_{2m+1, 2m-1} - f_{2m+1, 1} f_{2m+1, 2m}\\
 &= (c-a-b)(2am+b+c) f_{2m, 0} f_{2m, 2m-1} - (2am+c)(a+c)  f_{2m, 1} f_{2m, 2m} \\
     -  &   (2am+c)(2am+b)  f_{2m, 0} f_{2m, 2m} + c(2a+b)f_{2m, 0} f_{2m, 2m-2} - (a+c)(a+b) f_{2m, 1} f_{2m, 2m-1}\\
   \leqslant & (c-a-b)(2am+b+c) f_{2m, 0} f_{2m, 2m} - (2am+c)(a+c)  f_{2m, 1} f_{2m, 2m}  \\
     -  &  (2am+c)(2am+b)  f_{2m, 0} f_{2m, 2m} + a(c-a-b)f_{2m, 0} f_{2m, 2m-2}.
\end{align*}
Since $a+b\geqslant c$, we get $\Delta_3\leqslant 0$. Therefore, we obtain
$$\frac{f_{2m+1, 0}}{f_{2m+1, 2m}} \leqslant  \frac{f_{2m+1, 1}}{f_{2m+1, 2m-1}}.$$
From the right sides of~\eqref{ratio-fn-1} and~\eqref{ratio-fn-2}, we have
$f_{2m,m-2}\leqslant f_{2m,m+1}\leqslant f_{2m,m-1}\leqslant f_{2m,m}$.
So we find that 
\begin{align*}
&\Delta_4:=f_{2m+1, m-1} - f_{2m+1, m+1}\\
&=(am-a+c)f_{2m,m-1}+(am+2a+b)f_{2m,m-2}-(am+a+c)f_{2m,m+1}-(am+b)f_{2m,m}\\
&\leqslant (am-a+c)f_{2m,m}+(am+2a+b)f_{2m,m+1}-(am+a+c)f_{2m,m+1}-(am+b)f_{2m,m}\\
&=(c-a-b)(f_{2m,m}-f_{2m,m+1})\leqslant 0,
\end{align*}
which yields that $f_{2m+1, m-1}\leqslant f_{2m+1, m+1}$.
For $1\leqslant i\leqslant m-2$, we now show that 
$$\frac{f_{2m+1, i}}{f_{2m+1, 2m-i}} \leqslant  \frac{f_{2m+1, i+1}}{f_{2m+1, 2m-i-1}}.$$
From~\eqref{ratio-fn-1} and~\eqref{ratio-fn-2}, we have 
\begin{gather*}
  \frac{f_{2m, i-1}}{f_{2m, 2m-i}} \leqslant \frac{f_{2m, i}}{f_{2m, 2m-i-1}} \leqslant  \frac{f_{2m, i+1}}{f_{2m, 2m-i-2}}, \\
  \frac{f_{2m, 2m-i}}{f_{2m, i}} \leqslant \frac{f_{2m, 2m-i-1}}{f_{2m, i+1}}. 
\end{gather*}
In Lemma~\ref{lemma2}, setting $a_1=f_{2m, i-1},~a_2=f_{2m, 2m-i},~a_3=f_{2m, i}$, $a_4=f_{2m, 2m-i-1}$, $a_5=f_{2m, i+1}$, $a_6=f_{2m, 2m-i-2}$,
$\lambda_1=a(2m-i)+a+b$, $\lambda_2=a(2m-i)+c$, $\lambda=(2m+1)a+b+c$ and $\mu = a$,
it follows from~\eqref{recurrence relation} that 
\begin{align*}
&\frac{f_{2m+1, i}}{f_{2m+1, 2m-i}}=\frac{(2ma-ai+a+b)f_{2m,i-1}+(ai+c)f_{2m,i}}{(2ma-ai+c)f_{2m,2m-i}+(ai+a+b)f_{2m,2m-i-1}}\\
&\leqslant \frac{(2ma-ai+b)f_{2m,i}+(ai+a+c)f_{2m,i+1}}{(2ma-ai-a+c)f_{2m,2m-i-1}+(ai+2a+b)f_{2m,2m-i-2}}= \frac{f_{2m+1, i+1}}{f_{2m+1, 2m-i-1}},
\end{align*}
as desired. This completes the proof of~\eqref{ratio-fn+1-2}. 
The case that $n=2m+1$ can be dealt with in the same manner, and we omit it for simplicity.
\end{proof}
\section{Applications of Theorem~\ref{thmmain}}\label{Section03}
In this section, we apply Theorem~\ref{thmmain} to derive certain new results in a unified manner.
\subsection{$q$-Eulerian polynomials, $1/k$-Eulerian polynomials and generalized Eulerian polynomials}
Let $\msn$ be the set of all permutations of $[n]=\{1,2,\ldots,n\}$. 
For $\pi\in\msn$, we say that $i$ is an~{\it excedance} if $\pi(i)>i$.
Let $\excc(\pi)$ and $\cyc(\pi)$ be the numbers of excedances and cycles of $\pi$, respectively.
In~\cite{Brenti00}, Brenti studied the following $q$-Eulerian polynomials:
$$A_n(x,q)=\sum_{\pi\in\msn}x^{\excc(\pi)}q^{\cyc(\pi)}.$$
In particular, $A_1(x,q)=q,~A_2(x,q)=q(q+x)$ and $A_3(x,q)=q(q^2+(3q+1)x+x^2)$.
According to~\cite[Propositions~7.2,~7.3]{Brenti00}, the $q$-Eulerian polynomials $A_n(x,q)$ satisfy the recursion
\begin{equation}\label{A01}
A_{n+2}(x,q)=(nx+x+q)A_{n+1}(x,q)+x(1-x)\frac{\partial}{\partial x}A_{n+1}(x,q),
\end{equation}
and the exponential generating function of these polynomials is given as follows:
$$1+\sum_{n\geqslant 1}A_n(x,q)\frac{z^n}{n!}={\left(\frac{1-x}{\mathrm{e}^{z(x-1)}-x}\right)}^q.$$
When $q$ is a positive rational number,
Brenti showed that $A_n(x,q)$ has only real nonpositive simple zeros, and so it is log-concave and unimodal (\cite[Theorem~7.5]{Brenti00}).

Setting $L_{n}(x,q)=x^{n}A_{n+1}(1/x,q)$, it follows from~\eqref{Qmx-recu02} and~\eqref{A01} that 
\begin{equation}\label{gn01}
L_{n+1}(x,q)=(nx+qx+1)L_n(x,q)+x(1-x)\frac{\mathrm{d}}{\mathrm{d}x}L_n(x,q).
\end{equation}
Following Hwang-Chern-Duh~\cite[p.~26]{Hwang20}, the polynomials $L_n(x,q)$ can be called {\it LI Shanlan polynomials}, since these polynomials 
first appeared in his 1867 book.
Combining the recursions~\eqref{A01},~\eqref{gn01} and Theorem~\ref{thmmain}, we can give the following result.
\begin{corollary}\label{cor-Anq}
For any $n\geqslant 1$, we have the following results:
\begin{itemize}
  \item [\rm ($c_1$)] When $0<q\leqslant 1$, the polynomial $A_n(x,q)$ is bi-gamma-positive;
 \item [\rm ($c_2$)] When $0< q\leqslant 1$, the polynomial $x^{n-1}A_n(1/x,q)$ is ratio monotone;
\item [\rm ($c_3$)] When $1\leqslant q\leqslant 2$, the polynomial $A_n(x,q)$ is ratio monotone and $A_n(x,q)$ can be written as a sum of two gamma-positive polynomials with their degrees differing by 1.
\end{itemize}
\end{corollary}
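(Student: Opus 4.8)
The plan is to deduce all three parts of the corollary from Theorem~\ref{thmmain} (for a real parameter $b$), applied once to a normalization of $A_n(x,q)$ and once to its reciprocal polynomial. Fix $q>0$ and set $F_n(x):=\tfrac1q A_{n+1}(x,q)$ and $G_n(x):=\tfrac1q L_n(x,q)=x^nF_n(1/x)$. Since $A_1(x,q)=q$ and $L_0(x,q)=q$, both $F_0$ and $G_0$ equal $1$; dividing \eqref{A01} by $q$ shows that $F_n$ satisfies \eqref{fnx-recu} with $(a,b,c)=(1,1,q)$, and dividing \eqref{gn01} by $q$ (equivalently, using \eqref{Qmx-recu02}) shows that $G_n$ satisfies \eqref{fnx-recu} with $(a,b,c)=(1,q,1)$. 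The recursion forces the coefficient of $x^n$ in $F_n$ to be $1$ and that in $G_n$ to be $q^n$, both nonzero, so $\deg F_n=\deg G_n=n$. Since bi-gamma-positivity, ratio monotonicity, and being a sum of two gamma-positive polynomials are all unchanged under multiplication by the positive constant $q$, it suffices to establish the corresponding statements for $F_n$ and $G_n$.

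One hypothesis of Theorem~\ref{thmmain} does not literally hold here, namely $b=q$ need not be an integer. However, integrality of $a,b,c$ is used nowhere in Section~\ref{Section02}: the proof of bi-gamma-positivity only needs the coefficients appearing in \eqref{recu-system02} to be nonnegative, which holds whenever $a\geqslant0$ and $a+c\geqslant b\geqslant c>0$, and the proof of ratio monotonicity only uses positivity of the coefficients in \eqref{recurrence relation} together with the inequalities implied by the hypothesis. Hence Theorem~\ref{thmmain} remains valid for arbitrary real $a,b,c$ with $a\geqslant0$ satisfying $a+c\geqslant b\geqslant c>0$, in particular for $(a,b,c)=(1,1,q)$ and $(a,b,c)=(1,q,1)$.

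Now I would read off the conclusions. For $0<q\leqslant1$ the triple $(1,1,q)$ satisfies $a+c\geqslant b\geqslant c>0$, so Theorem~\ref{thmmain} applied to $F_n$ shows that $F_n$, hence $A_{n+1}(x,q)=qF_n(x)$, is bi-gamma-positive (this is $(c_1)$), and that $x^nF_n(1/x)=G_n$, hence $x^nA_{n+1}(1/x,q)=qG_n(x)$, is ratio monotone (this is $(c_2)$). For $1\leqslant q\leqslant2$ the triple $(1,q,1)$ satisfies $a+c\geqslant b\geqslant c>0$, so Theorem~\ref{thmmain} applied to $G_n$ shows that $x^nG_n(1/x)=F_n$, hence $A_{n+1}(x,q)$, is ratio monotone, and that $G_n$ is bi-gamma-positive; writing the symmetric decomposition $G_n(x)=\widetilde a_n(x)+x\widetilde b_n(x)$ with $\widetilde a_n,\widetilde b_n$ gamma-positive and using the identity $x^nf_n(1/x)=a_n(x)+b_n(x)$ recorded in Section~\ref{Section02}, we get $F_n(x)=x^nG_n(1/x)=\widetilde a_n(x)+\widetilde b_n(x)$, so $A_{n+1}(x,q)=q\widetilde a_n(x)+q\widetilde b_n(x)$ is a sum of two gamma-positive polynomials; when $q>1$ these have degrees $n$ and $n-1$, while for $q=1$ one has $\widetilde b_n=0$ and $A_{n+1}(x)$ is the classical gamma-positive Eulerian polynomial. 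This is $(c_3)$, and re-indexing $n\mapsto n-1$ gives the statements for $A_n$; the case $n=1$ is trivial.

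I do not expect a serious obstacle. The one thing to be careful about is the bookkeeping: choosing, for each of $(c_1)$--$(c_3)$, whether to feed $F_n$ or its reciprocal $G_n$ into Theorem~\ref{thmmain}, and verifying $F_0=G_0=1$ and $\deg F_n=\deg G_n=n$ -- together with the remark that integrality was never needed. It is worth flagging that $(c_3)$ does \emph{not} claim $A_n(x,q)$ is bi-gamma-positive; the weaker ``sum of two gamma-positive summands of consecutive degrees'' is precisely the image, under $x\mapsto1/x$, of the bi-gamma-positivity of the reciprocal polynomial $G_n$.
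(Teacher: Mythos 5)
Your proposal is correct and follows essentially the same route as the paper, which proves the corollary simply by citing the recursions \eqref{A01} and \eqref{gn01} together with Theorem~\ref{thmmain} (with $(a,b,c)=(1,1,q)$ for $(c_1)$--$(c_2)$ and $(a,b,c)=(1,q,1)$ for $(c_3)$). Your extra care in normalizing $A_1(x,q)=q$ to obtain $f_0=1$, and in observing that integrality of $a,b,c$ is never actually used in Section~\ref{Section02} (so the theorem applies to real $q$), addresses details the paper leaves implicit.
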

It should be noted that $A_n(x,2)$ is the big descent polynomials over $\ms_{n+1}$, where a big descent is an index $i$ such that $\pi(i)\geqslant\pi(i+1)+2$, see~\cite[A120434]{Sloane}. We list the first few polynomials:
$$A_2(x,2)=4+2x,~A_3(x,2)=8+14x+2x^2,~A_4(x,2)=16 + 66 x + 36 x^2 + 2 x^3.$$
When $n\geqslant 4$, by~\eqref{recu-system} and~\eqref{gn01}, it is routine to verify that if $q>2$, then $A_n(x,q)$ can not be written as a sum of two gamma-positive polynomials with their degrees differing by 1. For examples, 
$$A_4(x,3)=81 + 201 x + 75 x^2 + 3 x^3,~A_4(x,4)=256 + 452 x + 128 x^2 + 4 x^3.$$

Following Savage-Viswanathan~\cite{Savage12}, the {\it $1/k$-Eulerian polynomials} $A_{n}^{(k)}(x)$ are defined by
\begin{equation*}\label{Ankx-def01}
\sum_{n=0}^\infty A_{n}^{(k)}(x)\frac{z^n}{n!}=\left(\frac{1-x}{\mathrm{e}^{kz(x-1)}-x} \right)^{\frac{1}{k}},
\end{equation*}
where $k\geqslant 1$.
They found that $A_{n}^{(k)}(x)$ are the ascent polynomials over $\rm k$-inversion sequences.
A more well known interpretation is given as follows (see~\cite{Savage12,Savage15}):
\begin{equation}\label{Ankx-def02}
A_{n}^{(k)}(x)=\sum_{\pi\in\msn}x^{\excc(\pi)}k^{n-\cyc(\pi)}.
\end{equation}
The polynomials $A_{n}^{(k)}(x)$ are also the ascent-plateau polynomials over $k$-Stirling permutations~\cite{Ma15,Ma26}. 
They satisfy the recursion
$$A_{n+2}^{(k)}(x)=(nkx+kx+1)A_{n+1}^{(k)}(x)+kx(1-x)\frac{\mathrm{d}}{\mathrm{d}x}A_{n+1}^{(k)}(x).$$
Below are these polynomials for $n\leqslant 3$:
$$A_1^{(k)}(x)=1,~A_2^{(k)}(x)=1+kx,~A_3^{(k)}(x)=1+3kx+k^2x(1+x).$$

The bi-gamma-positivity of $A_n^{(k)}(x)$ was first established in~\cite[Section~3.4]{Ma24}, and
Yan-Yang-Lin~\cite{Yan26} gave a nice combinatorial proof of this result. Combining Corollary~\ref{cor-Anq} and~\eqref{Ankx-def02}, we get the following result.
\begin{theorem}
The reciprocal $1/k$-Eulerian polynomials $x^{n}A_{n+1}^{(k)}(1/x)$ are ratio monotone.
\end{theorem}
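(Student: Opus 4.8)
The plan is to recognize the recursion for the $1/k$-Eulerian polynomials as a special case of~\eqref{fnx-recu} and to appeal directly to Theorem~\ref{thmmain}. For a fixed positive integer $k$, put $f_n(x):=A_{n+1}^{(k)}(x)$. Then $f_0(x)=A_1^{(k)}(x)=1$, and the stated recursion
$$A_{n+2}^{(k)}(x)=(nkx+kx+1)A_{n+1}^{(k)}(x)+kx(1-x)\frac{\mathrm{d}}{\mathrm{d}x}A_{n+1}^{(k)}(x)$$
becomes precisely~\eqref{fnx-recu} with $a=k$, $b=k$ and $c=1$, all nonnegative integers. From~\eqref{Ankx-def02} (or from the initial values $A_1^{(k)}(x)=1$, $A_2^{(k)}(x)=1+kx$, $A_3^{(k)}(x)=1+3kx+k^2x(1+x)$) one reads off $\deg A_{n+1}^{(k)}(x)=n$, so the degree hypothesis $\deg f_n(x)=n$ holds.

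It then remains only to check the inequalities $a+c\geqslant b\geqslant c>0$ demanded by Theorem~\ref{thmmain}. With $a=k$, $b=k$, $c=1$ this reads $k+1\geqslant k\geqslant 1>0$, which is true for every $k\geqslant 1$. Hence Theorem~\ref{thmmain} applies verbatim and yields at once that $f_n(x)=A_{n+1}^{(k)}(x)$ is bi-gamma-positive and that $x^nf_n(1/x)=x^nA_{n+1}^{(k)}(1/x)$ is ratio monotone, which is exactly the claim.

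An alternative route, matching the phrasing that precedes the statement, uses the identity $A_n^{(k)}(x)=k^nA_n(x,1/k)$, which is immediate from~\eqref{Ankx-def02} and the definition $A_n(x,q)=\sum_{\pi\in\msn}x^{\excc(\pi)}q^{\cyc(\pi)}$. Since $k\geqslant 1$ forces $0<1/k\leqslant 1$, part~($c_2$) of Corollary~\ref{cor-Anq} gives that $x^{n-1}A_n(1/x,1/k)$ is ratio monotone for every $n\geqslant 1$; replacing $n$ by $n+1$ and multiplying through by the positive constant $k^{n+1}$---which scales all coefficients equally and hence leaves every ratio in~\eqref{ratio-def1} and~\eqref{ratio-def2} unchanged---shows that $x^nA_{n+1}^{(k)}(1/x)$ is ratio monotone. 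In either approach the only things to watch are the harmless index shift $n\mapsto n+1$ and the scale-invariance of ratio monotonicity; there is no real obstacle, since the substance has already been established in Theorem~\ref{thmmain} and Corollary~\ref{cor-Anq}.
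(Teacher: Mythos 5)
Your proposal is correct and essentially matches the paper: the paper proves this theorem by combining Corollary~\ref{cor-Anq}($c_2$) with the identity $A_n^{(k)}(x)=k^nA_n(x,1/k)$ from~\eqref{Ankx-def02}, which is exactly your ``alternative route.'' Your primary route---applying Theorem~\ref{thmmain} directly to the recursion for $A_{n+1}^{(k)}(x)$ with $a=k$, $b=k$, $c=1$ and checking $k+1\geqslant k\geqslant 1>0$---is just an equivalent reparametrization of the same argument (Corollary~\ref{cor-Anq} itself rests on Theorem~\ref{thmmain}), and both are valid.
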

For example, 
$A_5^{(k)}(x)=1 + (10 k  + 10 k^2  + 5 k^3  + k^4) x + (25 k^2  + 30 k^3  + 
 11 k^4) x^2 + (15 k^3  + 11 k^4) x^3 + k^4 x^4$.
When $k\geqslant 1$, we have 
$$\frac{1}{k^4}\leqslant \frac{10 k  + 10 k^2  + 5 k^3  + k^4}{15 k^3  + 11 k^4}\leqslant 1,$$
$$\frac{k^4}{10 k  + 10 k^2  + 5 k^3  + k^4}\leqslant \frac{15 k^3  + 11 k^4}{25 k^2  + 30 k^3  + 
 11 k^4}\leqslant 1. $$
 
Consider a kind of generalized Eulerian polynomials defined by
\begin{equation}\label{P01}
P_{n+1}(x;p,q)=(nx+px+q)P_{n}(x;p,q)+x(1-x)\frac{\partial}{\partial x}P_{n}(x;p,q),~P_0(x;p,q)=1.
\end{equation}
These polynomials were introduced by Morisita~\cite{Morisita71}, and they were also independently studied
by Carlitz-Scoville~\cite{Carlitz74}. It should be noted that these polynomials appeared in the context of random staircase tableaux, and Hitczenko-Janson~\cite{Hitczenko14}
investigated their asymptotic distribution.
Combining~\eqref{P01} and Theorem~\ref{thmmain}, we end this subsection by giving the following result.
\begin{corollary}
If $1+q\geqslant p\geqslant q>0$, then $P_n(x;p,q)$ is bi-gamma-positive and 
$x^nP_n(1/x;p,q)$ is ratio monotone. If $1+p\geqslant q\geqslant p>0$, then $P_n(x;p,q)$ is ratio monotone.
\end{corollary}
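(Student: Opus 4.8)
The plan is to obtain both parts as immediate specializations of Theorem~\ref{thmmain} together with the equivalence $(i)\Leftrightarrow(ii)$ recorded at the beginning of the proof of the ratio monotonicity of $f_n(x)$ in Section~\ref{Section02}. First I would set $a=1$, $b=p$, $c=q$, so that the defining recurrence~\eqref{P01} of $P_n(x;p,q)$ becomes exactly the Eulerian-type recurrence~\eqref{fnx-recu} with $f_0(x)=1$. Before invoking Theorem~\ref{thmmain} one must check its standing hypothesis $\deg f_n(x)=n$: a one-line induction does this, since if $P_n(x;p,q)$ has degree $n$ with leading coefficient $\ell_n$, the degree-$(n+1)$ contributions of $(nx+px+q)P_n$ and of $x(1-x)\frac{\partial}{\partial x}P_n$ are $(n+p)\ell_n x^{n+1}$ and $-n\ell_n x^{n+1}$ respectively, whence $\ell_{n+1}=p\,\ell_n$ and $\ell_n=p^n\neq 0$ whenever $p>0$.

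For the first assertion, the ordering hypothesis $a+c\geqslant b\geqslant c>0$ of Theorem~\ref{thmmain} reads $1+q\geqslant p\geqslant q>0$, and under this condition the theorem yields directly that $P_n(x;p,q)$ is bi-gamma-positive and that $x^nP_n(1/x;p,q)$ is ratio monotone. For the second assertion I would appeal to statement $(ii)$ from the proof of the ratio monotonicity of $f_n(x)$: comparing~\eqref{fnx-recu} with~\eqref{Qmx-recu02}, this statement asserts that the hypothesis $a+b\geqslant c\geqslant b>0$ forces $f_n(x)$ itself to be ratio monotone. With $a=1$, $b=p$, $c=q$ this is precisely $1+p\geqslant q\geqslant p>0$, so $P_n(x;p,q)$ is ratio monotone.

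The only point requiring a word of care is that here $p$ and $q$ are positive reals, whereas Theorem~\ref{thmmain} is stated for nonnegative integers $a,b,c$; but its proof never uses integrality. The coefficient recursions~\eqref{recu-system02} establishing bi-gamma-positivity and the inductive inequality estimates built on the coefficient recurrence~\eqref{recurrence relation} and Lemma~\ref{lemma2} depend only on the signs and the ordering of $a,b,c$, so the whole argument transfers verbatim. Hence I expect no genuine obstacle: the corollary is a direct specialization of the main theorem, and the only things to be careful about are matching the parameter inequalities in the two cases and noting that the degree hypothesis holds because $p>0$.
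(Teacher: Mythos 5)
Your proposal is correct and matches the paper's treatment: the corollary is obtained by specializing Theorem~\ref{thmmain} (and the equivalent statement $(ii)$ from the proof of ratio monotonicity) with $a=1$, $b=p$, $c=q$. Your added checks—that $\deg P_n=p^n x^n+\cdots$ so the degree hypothesis holds for $p>0$, and that the proof of the theorem never uses integrality of $a,b,c$—are worthwhile details the paper leaves implicit.
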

 
\subsection{The $q$-Eulerian polynomials of type $B$}
Let $\mbn$ denote the hyperoctahedral group of rank $n$. 
Elements of $\mbn$ are signed permutations of the set $\pm[n]=[n]\cup\{\overline{1},\ldots,\overline{n}\}$
with the property that $\sigma(\overline{i})=-\sigma(i)$ for all $i\in [n]$, where $\overline{i}=-i$.
Following Brenti~\cite{Brenti94},
the {\it $q$-Eulerian polynomials of type $B$} are defined by
$$B_n(x,q)=\sum_{\sigma\in\mbn}x^{\operatorname{des}_B(\sigma)}q^{\operatorname{neg}(\sigma)},$$
where $\operatorname{neg}(\sigma)=\#\{i\in [n]:~\pi(i)<0\}$ and $$\operatorname{des}_B(\pi)=\#\{i\in\{0,1,2,\ldots,n-1\}:~\pi(i)>\pi({i+1})~\&~\pi(0)=0\}.$$
They satisfy the recursion
$$B_{n+1}(x,q)=((1+q)nx+qx+1)B_n(x,q)+(1+q)x(1-x)\frac{\partial}{\partial x}B_{n}(x,q),$$
with the initial conditions $B_0(x,q)=1$ and $B_1(x,q)=1+qx$.
Setting $h_{n}(x)=x^{n}B_{n}(1/x,q)$, it follows from~\eqref{Qmx-recu02} that 
\begin{equation*}
h_{n+1}(x)=((1+q)nx+x+q)h_n(x,q)+(1+q)x(1-x)\frac{\partial}{\partial x}h_{n}(x,q).
\end{equation*}
Combining the above the recursions and Theorem~\ref{thmmain}, we get the following result.
\begin{corollary}
For any $n\geqslant 1$, we have the following results:
\begin{itemize}
  \item [\rm ($c_1$)] When $q\geqslant1$, the polynomial $B_n(x,q)$ is bi-gamma-positive;
 \item [\rm ($c_2$)] When $q\geqslant 1$, the polynomial $x^{n}B_n(1/x,q)$ is ratio monotone;
\item [\rm ($c_3$)] When $0<q\leqslant 1$, the polynomial $B_n(x,q)$ is ratio monotone and it can be written as a sum of two gamma-positive polynomials with their degrees differing by 1.
\end{itemize}
\end{corollary}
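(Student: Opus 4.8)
The plan is to read off the parameters $a,b,c$ from the recursion for $B_n(x,q)$ and then invoke Theorem~\ref{thmmain} together with the equivalence recorded inside its proof. Comparing
$$B_{n+1}(x,q)=\bigl((1+q)nx+qx+1\bigr)B_n(x,q)+(1+q)x(1-x)\frac{\partial}{\partial x}B_{n}(x,q),\qquad B_0(x,q)=1,$$
with~\eqref{fnx-recu}, one takes $a=1+q$, $b=q$ and $c=1$. Two preliminary points must be disposed of first. Although Theorem~\ref{thmmain} is stated for nonnegative integers, its proof uses only the recurrence system~\eqref{recu-system02} (for the bi-gamma-positivity) and Lemma~\ref{lemma2} with the induction following it (for the ratio monotonicity), and both arguments go through verbatim for arbitrary nonnegative reals $a,b,c$; so the theorem applies with the above values for any $q>0$, exactly as it was applied to $A_n(x,q)$ in Corollary~\ref{cor-Anq}. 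Next, Theorem~\ref{thmmain} presupposes $\deg B_n(x,q)=n$: the constant term of $B_n(x,q)$ is $1$ (contributed by the identity) and its coefficient of $x^n$ is $\sum_{\operatorname{des}_B(\sigma)=n}q^{\operatorname{neg}(\sigma)}=q^n$, the unique $\sigma\in\mbn$ with $\operatorname{des}_B(\sigma)=n$ being the one with $\sigma(i)=\overline{i}$ for all $i$; both are nonzero, so indeed $\deg B_n(x,q)=n$ and likewise $\deg\bigl(x^nB_n(1/x,q)\bigr)=n$.

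For parts ($c_1$) and ($c_2$) I would just unwind the hypothesis $a+c\geqslant b\geqslant c>0$ of Theorem~\ref{thmmain}: with $(a,b,c)=(1+q,q,1)$ it reads $2+q\geqslant q\geqslant 1>0$, which holds precisely when $q\geqslant 1$. Theorem~\ref{thmmain} then yields at once that $B_n(x,q)$ is bi-gamma-positive and that $x^nB_n(1/x,q)$ is ratio monotone whenever $q\geqslant 1$.

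Part ($c_3$) has two halves. The ratio monotonicity of $B_n(x,q)$ follows from the statement established in the proof of Theorem~\ref{thmmain} that ``$a+b\geqslant c\geqslant b>0$ implies $f_n(x)$ is ratio monotone'': with $(a,b,c)=(1+q,q,1)$ this condition becomes $1+2q\geqslant 1\geqslant q>0$, i.e.\ $0<q\leqslant 1$. For the decomposition the key move is to apply the bi-gamma-positive half of Theorem~\ref{thmmain} not to $B_n$ but to the reciprocal polynomial $h_n(x):=x^nB_n(1/x,q)$: by~\eqref{Qmx-recu02} the sequence $(h_n)$, with $h_0=1$, satisfies~\eqref{fnx-recu} with the roles of $b$ and $c$ swapped, that is with parameters $(1+q,1,q)$, for which the hypothesis $a+c\geqslant b\geqslant c>0$ again becomes $1+2q\geqslant 1\geqslant q>0$, namely $0<q\leqslant 1$, so $h_n(x)$ is bi-gamma-positive on that range. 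Writing the symmetric decomposition $h_n(x)=\widetilde{a}_n(x)+x\widetilde{b}_n(x)$, with $\widetilde{a}_n$ and $\widetilde{b}_n$ gamma-positive of degrees $n$ and $n-1$, formula~\eqref{ax-bx-prop01} gives $\widetilde{a}_n(x)+\widetilde{b}_n(x)=x^nh_n(1/x)=B_n(x,q)$, which exhibits $B_n(x,q)$ as a sum of two gamma-positive polynomials whose degrees differ by $1$.

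Almost all of this is bookkeeping, and the step I expect not to be automatic is the last one: one must recognize that the decomposition asserted for $B_n(x,q)$ in ($c_3$) is \emph{not} the bi-gamma-positivity of $B_n$ itself — which in fact fails for $0<q<1$, since the coefficient $b-c=q-1$ in~\eqref{recu-system02} then turns negative — but is the bi-gamma-positivity of the reciprocal $h_n$, transported back through the identity $x^{\deg p}\,p(1/x)=a(x)+b(x)$ that holds for every symmetric decomposition $p=a+xb$. It would also be worth adding, parallel to the remark after Corollary~\ref{cor-Anq}, that for $0<q<1$ this weaker decomposition is genuinely the best possible.
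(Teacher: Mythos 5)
Your proposal is correct and follows exactly the paper's route: the paper likewise reads off $(a,b,c)=(1+q,q,1)$ for $B_n(x,q)$ and $(1+q,1,q)$ for $h_n(x)=x^nB_n(1/x,q)$ and invokes Theorem~\ref{thmmain} on each, the decomposition in ($c_3$) coming from the bi-gamma-positivity of $h_n$ via the identity $x^nh_n(1/x)=\widetilde{a}_n(x)+\widetilde{b}_n(x)$, with the extension from integer to real nonnegative parameters left tacit just as in Corollary~\ref{cor-Anq}. The only caveat is your closing aside: the sign of $b-c$ in~\eqref{recu-system02} does not by itself prove that bi-gamma-positivity of $B_n(x,q)$ fails for $0<q<1$ (one should instead exhibit a negative entry of the symmetric decomposition, e.g.\ $B_1(x,q)=(1+x)+x(q-1)$ already does this), but since that remark is not part of the proof it does not affect correctness.
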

For example, 
$B_4(x,q)=1 + (11 + 32 q + 24 q^2 + 8 q^3 + q^4) x + (11 + 56 q + 96 q^2 + 
    56 q^3 + 11 q^4) x^2 + (1 + 8 q + 24 q^2 + 32 q^3 + 11 q^4) x^3 + 
 q^4 x^4$. When $q\geqslant 1$, we have 
 $$\frac{1}{q^4}\leqslant \frac{11 + 32 q + 24 q^2 + 8 q^3 + q^4}{1 + 8 q + 24 q^2 + 32 q^3 + 11 q^4}\leqslant 1;$$
$$\frac{q^4}{11 + 32 q + 24 q^2 + 8 q^3 + q^4}\leqslant \frac{1 + 8 q + 24 q^2 + 32 q^3 + 11 q^4}{11 + 56 q + 96 q^2 + 
    56 q^3 + 11 q^4}\leqslant 1.$$
\subsection{The $r$-colored Eulerian polynomials}
Following Steingr\'imsson~\cite{Steingrimsson},
the {\it $r$-colored Eulerian polynomial} can be defined by
\begin{equation}\label{Anrx-recu}
A_{n+1,r}(x)=(rnx+(r-1)x+1)A_{n,r}(x)+rx(1-x)\frac{\mathrm{d}}{\mathrm{d}x}A_{n,r}(x),~A_{0,r}(x)=1.
\end{equation}
When $r=1$ and $r=2$, the polynomial $A_{n,r}(x)$ reduces to the types $A$ and $B$ Eulerian polynomials $A_n(x)$ and $B_n(x)$, respectively.
Very recently, there has been much work devoted to the bi-gamma-positivity of the $r$-colored Eulerian polynomials and their variations, 
see~\cite{Athanasiadis20,Han2021,Ma24} for details. In particular, it is now well known that $A_{n,r}(x)$ is bi-gamma-positive when $r>2$, see~\cite[Eq.~(21)]{Athanasiadis20} and~\cite[Theorem~7.5]{Ma24}.

Setting $e_{n,r}(x)=x^{n}A_{n,r}(1/x)$, it follows from~\eqref{Qmx-recu02} and~\eqref{Anrx-recu} that 
\begin{equation*}\label{Anrx-recu}
e_{n+1,r}(x)=(rnx+x+r-1)e_{n,r}(x)+rx(1-x)\frac{\mathrm{d}}{\mathrm{d}x}e_{n,r}(x),~e_{0,r}(x)=1.
\end{equation*}
Therefore, by Theorem~\ref{thmmain}, we arrive at the following result.
\begin{corollary}
For any $n\geqslant 1$, we have the following results:
\begin{itemize}
  \item [\rm ($c_1$)] When $r\geqslant2$, the reciprocal polynomial $x^nA_{n,r}(1/x)$ is ratio monotone;
   \item [\rm ($c_2$)] When $1\leqslant r\leqslant 2$, the polynomial $A_{n,r}(x)$ is ration monotone, and it can be written as a sum of two gamma-positive polynomials with their degrees differing by 1.
   \end{itemize}
\end{corollary}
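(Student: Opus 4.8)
The plan is to obtain both parts of the corollary as direct instances of Theorem~\ref{thmmain}, applied once to $A_{n,r}(x)$ itself and once to its reciprocal $e_{n,r}(x)=x^{n}A_{n,r}(1/x)$. First I would observe that the recursion~\eqref{Anrx-recu} is precisely the recursion~\eqref{fnx-recu} with $(a,b,c)=(r,r-1,1)$, and that the companion recursion for $e_{n,r}(x)$ displayed just before the corollary is~\eqref{Qmx-recu02} with those same parameters, i.e.\ it is~\eqref{fnx-recu} with $(a,b,c)=(r,1,r-1)$; in particular $e_{0,r}(x)=A_{0,r}(x)=1$, as required in Theorem~\ref{thmmain}. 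I would also check the standing hypothesis $\deg f_n=n$: the recursion forces $A_{n,r}(0)=1$ and leading coefficient $(r-1)^{n}$, so $\deg A_{n,r}=\deg e_{n,r}=n$ whenever $r\neq1$, while $r=1$ degenerates to the classical Eulerian polynomial $A_n(x)$ of degree $n-1$ (treated separately at the end).

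For $(c_1)$ I would apply Theorem~\ref{thmmain} to $f_n=A_{n,r}(x)$ with $(a,b,c)=(r,r-1,1)$: the inequalities $a+c\geqslant b\geqslant c>0$ read $r+1\geqslant r-1\geqslant 1>0$, which hold exactly when $r\geqslant2$, and the theorem then yields immediately that $x^{n}A_{n,r}(1/x)$ is ratio monotone (and, as a byproduct, that $A_{n,r}(x)$ is bi-gamma-positive). For $(c_2)$ with $1<r\leqslant2$ I would instead apply Theorem~\ref{thmmain} to $f_n=e_{n,r}(x)$ with $(a,b,c)=(r,1,r-1)$: here $a+c\geqslant b\geqslant c>0$ becomes $2r-1\geqslant1\geqslant r-1>0$, valid throughout $1<r\leqslant2$, and the theorem gives at once the bi-gamma-positivity of $e_{n,r}(x)$ and the ratio monotonicity of $x^{n}e_{n,r}(1/x)=A_{n,r}(x)$, which is the first assertion of $(c_2)$. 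For the decomposition assertion I would take the symmetric decomposition~\eqref{ax-bx-prop01} $e_{n,r}(x)=p_n(x)+xq_n(x)$, in which bi-gamma-positivity makes $p_n$ and $q_n$ gamma-positive with $\deg p_n=n$ and $\deg q_n=n-1$ (or $q_n\equiv0$), and then invoke the identity $x^{n}h(1/x)=p(x)+q(x)$ valid for the symmetric decomposition $h=p+xq$ of any degree-$n$ polynomial (recorded at the start of Section~\ref{Section02}) to get
\[
A_{n,r}(x)=x^{n}e_{n,r}(1/x)=p_n(x)+q_n(x),
\]
a sum of two gamma-positive polynomials whose degrees differ by $1$.

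It remains to handle the endpoint $r=1$. Here $A_{n,1}(x)=A_n(x)$ is palindromic and has only real zeros (by the Proposition above, or classically), hence log-concave with positive coefficients; and a palindromic log-concave polynomial $\sum a_ix^i$ of degree $n$ is automatically ratio monotone, since every ratio $a_{n-i}/a_i$ in~\eqref{ratio-def1} equals $1$, while~\eqref{ratio-def2} amounts exactly to the log-concavity inequalities $a_{i-1}a_{i+1}\leqslant a_i^{2}$ together with unimodality for the closing $\leqslant1$; moreover $A_n(x)$ is gamma-positive by Foata--Sch\"utzenberger. I do not expect a genuine obstacle in any of this --- it is essentially a translation between~\eqref{Anrx-recu} and~\eqref{fnx-recu}/\eqref{Qmx-recu02} together with two short inequality checks. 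The only points that will need care are bookkeeping ones: verifying $\deg A_{n,r}=n$ so that Theorem~\ref{thmmain} genuinely applies, keeping the roles of $b$ and $c$ straight when passing to the reciprocal recursion, and reading the ``sum of two gamma-positive polynomials'' clause at the degenerate endpoints $r=2$ (where $b=c$ for $e_{n,r}$, so $q_n\equiv0$ and one recovers the plain gamma-positivity of $B_n(x)=A_{n,2}(x)$) and $r=1$ (where the even part of the relevant symmetric decomposition vanishes), at which the displayed sum collapses to a single gamma-positive polynomial.
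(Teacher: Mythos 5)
Your proposal is correct and follows essentially the same route as the paper: the corollary is obtained by feeding the pair of companion recursions for $A_{n,r}(x)$ and $e_{n,r}(x)=x^nA_{n,r}(1/x)$ into Theorem~\ref{thmmain} with parameters $(a,b,c)=(r,r-1,1)$ and $(r,1,r-1)$ respectively, exactly as you do. Your extra bookkeeping at the endpoints --- checking $\deg A_{n,r}=n$, treating $r=1$ separately (where $c=0$ and the degree drops, so the theorem does not literally apply), and noting the collapse of the two-term decomposition at $r=2$ --- goes beyond what the paper writes out and is a welcome clarification rather than a deviation.
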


\section*{Acknowledgements}
The second author is supported by the National Natural Science Foundation of China (No. 12401459) and 
the Natural Science Foundation of Shandong Province of China (ZR2024QA075).
The third author is supported by the National Natural Science Foundation of China (No. 12071063) and
Taishan Scholars Program of Shandong Province (No. tsqn202211146).

\end{document}